\documentclass[11pt]{article}

\usepackage[T1]{fontenc}
\usepackage{amsmath,amsthm,amssymb,mathtools}
\usepackage{newtxtext,newtxmath}
\usepackage{microtype}
\usepackage[margin=1in]{geometry}
\usepackage{hyperref}
\usepackage{bookmark}

\allowdisplaybreaks
\newtheorem{theorem}{Theorem}[section]
\newtheorem{proposition}[theorem]{Proposition}
\newtheorem{lemma}[theorem]{Lemma}
\newtheorem{corollary}[theorem]{Corollary}
\theoremstyle{definition}
\newtheorem{example}[theorem]{Example}
\newcommand{\Nop}{\mathcal N}
\newcommand{\Schubert}{\mathfrak S}
\newcommand{\R}{\mathbb R}
\newcommand{\Znonneg}{\mathbb Z_{\ge0}}
\newcommand{\supp}{\operatorname{supp}}

\title{Normalized skew Schur polynomials are Lorentzian}
\author{Philip B. Zhang\\[2pt]
  \small College of Mathematical Sciences \& Institute of Mathematics and\\
  \small Interdisciplinary Sciences, Tianjin Normal University,\\
  \small Tianjin 300387, P. R. China\\[2pt]
  \small \texttt{zhang@tjnu.edu.cn}}
\date{}

\hypersetup{
  colorlinks=true,
  linkcolor=blue,
  citecolor=blue,
  pdfauthor={Philip B. Zhang},
  pdftitle={Normalized skew Schur polynomials are Lorentzian},
  pdfsubject={Skew Schur polynomials, Schubert polynomials, and Lorentzian polynomials},
  pdfkeywords={skew Schur polynomials, Lorentzian polynomials, dually Lorentzian polynomials, rectangular complementation, skew Kostka numbers}
}

\begin{document}
\maketitle

\begin{abstract}
We prove the conjecture of Huh, Matherne, M\'esz\'aros, and St.~Dizier that the normalization of every skew Schur polynomial in finitely many variables is Lorentzian.  We first realize every nonzero skew Schur polynomial in finitely many variables as a specialization of a Schubert polynomial and prove that it is dually Lorentzian.  The dual Jacobi--Trudi identity then identifies its normalization with the finite dual of a skew Schur polynomial obtained by rectangular complementation.  As a consequence, skew Kostka numbers satisfy log-concavity inequalities along the root directions.
\end{abstract}

\noindent\textbf{2020 Mathematics Subject Classification.}
05E05, 05E14.

\smallskip
\noindent\textbf{Keywords.}
Skew Schur polynomials, Lorentzian polynomials, dually Lorentzian polynomials,
rectangular complementation, skew Kostka numbers.

\section{Introduction}

Br\"and\'en and Huh \cite{BH} introduced Lorentzian polynomials.  Huh,
Matherne, M\'esz\'aros, and St.~Dizier
\cite[Theorem~3 and Conjecture~19]{HMMSD} proved that normalized Schur
polynomials are Lorentzian and conjectured the corresponding result for skew
Schur polynomials.  In this paper, we prove their conjecture.

For $n\ge1$, write $x=(x_1,\ldots,x_n)$ and
$x^{-1}=(x_1^{-1},\ldots,x_n^{-1})$.  For
$\alpha=(\alpha_1,\ldots,\alpha_n)\in\Znonneg^n$, set
$|\alpha|=\alpha_1+\cdots+\alpha_n$,
$x^\alpha=x_1^{\alpha_1}\cdots x_n^{\alpha_n}$, and
$\alpha!=\alpha_1!\cdots\alpha_n!$.  If
$f=\sum_{\alpha\in\Znonneg^n}c_\alpha x^\alpha$, the normalization operator
is defined by
\begin{equation}\label{eq:normalization}
  \Nop(f)
  =\sum_{\alpha\in\Znonneg^n}\frac{c_\alpha}{\alpha!}x^\alpha.
\end{equation}
For a polynomial $f$, let
$\supp(f)=\{\alpha\in\Znonneg^n:c_\alpha\ne0\}$.  Let
$\mathbf e_1,\ldots,\mathbf e_n$ be the standard basis vectors of
$\mathbb Z^n$.  A set $J\subseteq\Znonneg^n$ is \emph{$M$-convex} if, whenever
$\alpha,\beta\in J$ and $\alpha_i>\beta_i$, there is an index $j$ such that
$\alpha_j<\beta_j$ and both
$\alpha-\mathbf e_i+\mathbf e_j$ and
$\beta-\mathbf e_j+\mathbf e_i$ belong to $J$.
For background on discrete convex analysis and $M$-convexity, see Murota
\cite[Chapter~4]{Murota}.

Br\"and\'en and Huh
\cite[Definition~2.6 and Theorem~2.25]{BH} give the following characterization,
which we use as the definition of a Lorentzian polynomial.  Let
$f\in\R[x_1,\ldots,x_n]$ be homogeneous of degree $d$ with nonnegative
coefficients.  If $d=0$ or $1$, then $f$ is Lorentzian.  If $d\ge2$, then $f$
is Lorentzian precisely when $\supp(f)$ is $M$-convex and, for every
$\gamma\in\Znonneg^n$ with $|\gamma|=d-2$, the Hessian of
$\partial^\gamma f$ has at most one positive eigenvalue, where
$\partial^\gamma=\partial_{x_1}^{\gamma_1}\cdots
\partial_{x_n}^{\gamma_n}$.  This convention includes the zero polynomial.
Throughout the paper, $n$ denotes the number of variables, and every
Lorentzian or dually Lorentzian statement refers to the displayed polynomial
ring.  Our main result is as follows.

\begin{theorem}\label{thm:main}
Let $\mu\subseteq\lambda$ be partitions and let $n\ge1$.  Then
$\Nop\bigl(s_{\lambda/\mu}(x_1,\ldots,x_n)\bigr)$ is Lorentzian.
\end{theorem}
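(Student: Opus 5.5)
The plan follows the route outlined in the abstract: pass to the dual notion and prove that instead. Recall from Brändén and Huh that $f$ is \emph{dually Lorentzian} when $\Nop$ of a suitable "finite dual" is Lorentzian. Concretely, for $f$ of degree $d$ with $\deg_{x_i} f\le m$ for every $i$, set
\[
f^{\vee}(x)=x_1^{m}\cdots x_n^{m}\,f(x^{-1}).
\]
The duality we want is: if $f$ is dually Lorentzian, then $\Nop(f^\vee)$ is Lorentzian. Equivalently, the coefficient sequence of $f^\vee$, divided by factorials, is Lorentzian. The point is that the normalization $\Nop$ is dual, under the apolarity pairing, to multiplication by $\prod(1-\dots)$-type operators. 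More precisely, Brändén–Huh show that the class of polynomials $g$ with $\Nop(g)$ Lorentzian is closed under the operations needed and contains the duals of Lorentzian-type polynomials. I would use their criterion in this form: $\Nop(f^\vee)$ is Lorentzian iff $f$ is "dually Lorentzian".

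\textbf{Step 1: dual Lorentzian property via Schubert polynomials.} Every nonzero $s_{\lambda/\mu}(x_1,\dots,x_n)$ is a specialization of a Schubert polynomial. I would use the standard fact that a skew Schur function in $n$ variables equals $\Schubert_w$ for a suitable $321$-avoiding (vexillary-type) permutation $w$, after setting all variables beyond the first $n$ to zero, or after a shift of variables. Known results say Schubert polynomials are dually Lorentzian; in particular, their normalizations have Lorentzian duals and $M$-convex supports. Dual Lorentzianity should be preserved under setting variables to zero, since this is a coefficient restriction to a face, and restrictions to faces of $M$-convex sets preserve the relevant Hessian conditions. Hence $s_{\lambda/\mu}(x_1,\dots,x_n)$ is dually Lorentzian.

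\textbf{Step 2: rectangular complementation.} Choose $m\ge\lambda_1$ and consider the $n\times m$ box. By the dual Jacobi–Trudi identity $s_{\lambda/\mu}=\det(e_{\lambda'_i-\mu'_j-i+j})$, together with $e_k(x^{-1})\,x_1\cdots x_n=e_{n-k}(x)$, we get
\[
x_1^{m}\cdots x_n^{m}\,s_{\lambda/\mu}(x^{-1})=s_{\tilde\mu/\tilde\lambda}(x),
\]
where $\tilde\lambda,\tilde\mu$ are the complements of $\lambda,\mu$ in the box, rotated by $180^\circ$. This assumes $\ell(\lambda)\le n$; otherwise both sides vanish or we reduce to that case. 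So each skew Schur polynomial $s_{\alpha/\beta}(x_1,\dots,x_n)$ is the finite dual of another, $s_{\lambda/\mu}$, obtained by complementation. Applying Step 1 to $s_{\lambda/\mu}$ and the duality gives that $\Nop(s_{\alpha/\beta})$ is Lorentzian. The zero case is trivial.

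\textbf{Main obstacle.} The hard part is Step 1. First, one has to embed arbitrary skew shapes in $n$ variables as Schubert specializations with the correct variable count, accounting for rows that are too long. Second, one must check that the dually Lorentzian property really survives the specialization. I would first try to realize $s_{\lambda/\mu}(x_1,\dots,x_n)$ as $\Schubert_w(x_1,\dots,x_n,0,0,\dots)$ for a $321$-avoiding $w$ with all descents at most $n$. I would then verify stability under $x_i\mapsto0$ directly from the Brändén–Huh characterization of dual Lorentzianity.
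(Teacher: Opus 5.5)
Your overall strategy matches the paper's: Schubert specialization gives dual Lorentzianity of every nonzero $s_D(x_1,\ldots,x_n)$, and rectangular complementation via dual Jacobi--Trudi turns this into the statement about $\Nop$. The gap is in Step~2, the step you treated as routine: your complement identity is false.

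With $\tilde\lambda_i=m-\lambda_{n+1-i}$ and $\tilde\mu_i=m-\mu_{n+1-i}$, the shape $\tilde\mu/\tilde\lambda$ is just the $180^\circ$ rotation of $\lambda/\mu$. So $s_{\tilde\mu/\tilde\lambda}=s_{\lambda/\mu}$ has degree $|\lambda/\mu|$, while $(x_1\cdots x_n)^m s_{\lambda/\mu}(x^{-1})$ has degree $nm-|\lambda/\mu|$. For $n=m=1$, $\lambda=(1)$, $\mu=\varnothing$, your identity reads $1=x_1$.

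The complement of $\lambda/\mu$ in the $n\times m$ box is $\mu\cup\bigl((m^n)/\lambda\bigr)$, which is not a skew shape inside the box. The determinant tells you how to assemble it. Enlarge the dual Jacobi--Trudi matrix to size $m$ (the added block is unitriangular), multiply each row by $x_1\cdots x_n$, and transpose. This gives $\det\bigl(e_{n+\mu'_i-\lambda'_j-i+j}(x)\bigr)_{1\le i,j\le m}$, the dual Jacobi--Trudi determinant of $(m^n,\mu)/\lambda$. That shape is $\mu$ placed directly below the rectangle, so each column of length $c$ becomes one of length $n-c$. It has at most $m$ columns, so $\kappa=(m,\ldots,m)$ bounds its support, and $\Nop(s_{\lambda/\mu})=s_{(m^n,\mu)/\lambda}^{\vee,\kappa}$ follows. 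With this complement, the rest of your argument goes through.

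Your side condition is also the wrong one. For skew shapes, $\ell(\lambda)>n$ does not force vanishing: $\lambda=(1,1,1)$, $\mu=(1,1)$, $n=1$ gives $x_1$. Nor can you in general re-present the diagram with at most $n$ rows, because distinct connected components of a skew diagram occupy disjoint rows. For example, two disjoint vertical dominoes with $n=2$ give $(x_1x_2)^2\neq0$ but need four rows. What you actually need is $\lambda'_j-\mu'_j\le n$ for all $j$. This is equivalent to nonvanishing, and it is exactly the containment $\lambda\subseteq(m^n,\mu)$ for $m\ge\lambda_1$.

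Two smaller points in Step~1:
\begin{itemize}
\item Your plan to take $w$ with all descents at most $n$ cannot work in general. The polynomial $(x_1+x_2)^2=s_{(2,1)/(1)}(x_1,x_2)$ is not $\Schubert_w$ for any such $w$. The extra variables set to zero (the paper's $\mathrm{id}_m\times w$ with $m$ large) are essential.
\item Stability under $x_i\mapsto0$ is true, but your face-restriction justification is vague. The clean reason is that setting $x_i=0$ in $f$ corresponds to applying $\partial_{x_i}^{\kappa_i}$ to $f^{\vee,\kappa}$, and derivatives of Lorentzian polynomials are Lorentzian. The paper instead cites the Ross--S\"u\ss--Wannerer substitution theorem.
\end{itemize}
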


The Littlewood--Richardson rule (see Macdonald
\cite[Chapter~I]{Macdonald}) expresses a skew Schur polynomial as a
nonnegative integral linear combination of Schur polynomials.  This expansion
does not reduce Theorem~\ref{thm:main} to the result for ordinary Schur
polynomials because Lorentzian polynomials are not preserved by arbitrary
nonnegative linear combinations.

Our proof uses the following intermediate result.

\begin{theorem}\label{thm:dual-intro}
Let $D$ be a skew shape and let $n\ge1$.  If
$s_D(x_1,\ldots,x_n)\ne0$, then $s_D(x_1,\ldots,x_n)$ is dually
Lorentzian.
\end{theorem}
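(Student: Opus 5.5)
The proof will follow the route indicated in the abstract. First we realize $s_D(x_1,\ldots,x_n)$ as a specialization of a Schubert polynomial. Then we invoke the theorem of Br\"and\'en and Huh that Schubert polynomials are dually Lorentzian (equivalently, $\Nop(\Schubert_w)$ is Lorentzian). Finally we check that the specialization preserves dual Lorentzianity.

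Recall that $f$ of degree $d$ is dually Lorentzian when $\Nop^{-1}$-type duality holds: $f$ is dually Lorentzian if and only if the polynomial $\sum_\alpha c_\alpha\,\alpha!\,x^\alpha$-dual, i.e.\ $\Nop$ applied to the finite dual, is Lorentzian. The operations we need are those known to preserve this class:
\begin{itemize}
\item setting variables to zero;
\item identifying variables via an injective diagonal substitution that only permutes or restricts variables;
\item taking nonzero results.
\end{itemize}

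The first step is the combinatorial realization. For a skew shape $D=\lambda/\mu$ there is a vexillary, in fact $321$-avoiding, permutation $w$ whose Schubert polynomial is a flagged skew Schur polynomial (Billey--Jockusch--Stanley). More precisely, $\Schubert_w(x)=s_{\lambda/\mu}$ with flags, and choosing all flags to be at least $N$ (for $N$ large) gives $\Schubert_w(x_1,\ldots,x_N)=s_{\lambda/\mu}(x_1,\ldots,x_N)$. Concretely, take $N\ge n+\ell(\lambda)$. Using the shifted skew shape $\lambda/\mu$ translated so that all row flags equal $N$, the corresponding $321$-avoiding permutation in $S_{2N}$ has $\Schubert_w=s_{\lambda/\mu}(x_1,\ldots,x_N)$. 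Setting $x_{n+1}=\cdots=x_N=0$ then yields $s_D(x_1,\ldots,x_n)$.

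The second step is to verify that dual Lorentzianity survives this specialization. Setting a variable to zero in $f$ corresponds to taking the face of the support where $\alpha_i=0$, and restricting a Lorentzian polynomial to a coordinate face (via the dual, $\partial$-free restriction) keeps it Lorentzian. Hence $s_D(x_1,\ldots,x_n)$ is dually Lorentzian whenever it is nonzero; this nonvanishing is exactly where the hypothesis $s_D\ne0$ enters, since the zero polynomial is excluded from the dual notion.

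The main obstacle is the first step: producing, for an arbitrary (possibly disconnected) skew shape, a Schubert polynomial that is exactly the unflagged skew Schur polynomial in enough variables. If the Billey--Jockusch--Stanley flagged formula does not directly give uniform flags, I would instead use a Grassmannian-type construction: a skew shape corresponds to an interval $[v,w]$ in Bruhat order for Grassmannian permutations, and $s_{\lambda/\mu}$ is the Schubert polynomial of the $321$-avoiding permutation $w v^{-1}$ in shifted variables. I would then check, via the pipe-dream or tableau formula, that all flags can be made at least $n$ after shifting the shape down by enough empty rows.
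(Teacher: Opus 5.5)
Your architecture is the paper's (a Billey--Jockusch--Stanley permutation, dual Lorentzianity of Schubert polynomials, then setting the extra variables to zero). However, the concrete form of your Step~1 is false, and the part that actually needs proof is left as your ``main obstacle.''

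No permutation satisfies $\Schubert_w=s_{\lambda/\mu}(x_1,\ldots,x_N)$ in general. Indeed, $\Schubert_w$ is symmetric in $x_i,x_{i+1}$ exactly when $w(i)<w(i+1)$. So a Schubert polynomial that is symmetric in $x_1,\ldots,x_N$ and involves no other variable comes from a Grassmannian permutation with descent at $N$, and it is a single $s_\nu(x_1,\ldots,x_N)$. But $s_{(2,1)/(1)}(x_1,x_2)=s_{(2)}+s_{(1,1)}$ is not a single Schur polynomial. The flags are determined by $w$ and cannot be arranged to all equal $N$; your fallback via $wv^{-1}$ makes the same identification.

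What is true, and suffices, is the version in which the flags are only at least $n$ and the surplus variables are then killed. The paper proves this in two steps.
(i) Translate $D$ downward so that every box $(i,j)$ has $i>j$. Then $D$ is a convex subposet of $\mathbb Z^2$ with labels $i-j$, so BJS Proposition~2.2 gives a $321$-avoiding $w$ and a label-preserving poset isomorphism onto $\Sigma_w$. Labels drop by one along horizontal covers and rise by one along vertical covers, so this isomorphism preserves both kinds of covers and hence semistandardness. Therefore $s_{\Sigma_w}=s_D$ in every number of variables.
(ii) We have $\Schubert_{\mathrm{id}_m\times w}=s_{\Sigma_w}(X_{\widehat\phi_1+m},\ldots,X_{\widehat\phi_\ell+m})$, and all flags are at least $n$ for $m$ large. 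Setting $z_i=0$ for $i>n$ discards exactly the tableaux with an entry above $n$, so the flag conditions become vacuous.

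Two further corrections. First, the input is not ``equivalently, $\Nop(\Schubert_w)$ is Lorentzian.'' The two notions differ: Ross--S\"u\ss--Wannerer (Example~4.8) give a dually Lorentzian polynomial whose normalization is not Lorentzian, and Lorentzianity of $\Nop(\Schubert_w)$ is a separate conjecture of Huh--Matherne--M\'esz\'aros--St.~Dizier. What you may use is their Theorem~6: for $w\in S_q$, the finite dual of $\Schubert_w$ with respect to $(q-1,\ldots,q-1)$ is Lorentzian. This is their result, not Br\"and\'en--Huh's.

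Second, your Step~2 is sound in substance and is a nice elementary replacement for the paper's appeal to RSW Theorem~5.12. The mechanism, though, is a derivative rather than a ``$\partial$-free restriction.'' If $g=f(x_1,\ldots,x_{N-1},0)$ and $\kappa'=(\kappa_1,\ldots,\kappa_{N-1})$, then $g^{\vee,\kappa'}=\partial_{x_N}^{\kappa_N}f^{\vee,\kappa}$, which is Lorentzian because partial derivatives of Lorentzian polynomials are Lorentzian. Also, the hypothesis $s_D\ne0$ is not what makes this step work, since under the paper's convention the zero polynomial is Lorentzian.
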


Let $f\in\R[x_1,\ldots,x_n]$ be homogeneous.  Suppose that
$\kappa=(\kappa_1,\ldots,\kappa_n)\in\Znonneg^n$ bounds the exponent of each
variable in every monomial of $f$.  Write
$f^{\vee,\kappa}:=\Nop\bigl(x^\kappa
f(x_1^{-1},\ldots,x_n^{-1})\bigr)$.  In the terminology of Ross, S\"u\ss,
and Wannerer \cite[Definition~4.2]{RSW}, $f$ is dually Lorentzian when
$f^{\vee,\kappa}$ is Lorentzian.  Ross, S\"u\ss, and Wannerer
\cite[Example~4.8]{RSW} give a dually Lorentzian polynomial whose
normalization is not Lorentzian.  Thus
Theorem~\ref{thm:dual-intro} alone does not prove Theorem~\ref{thm:main}.

Billey, Jockusch, and Stanley
\cite[Proposition~2.2, Theorem~2.2, and the proof of
Corollary~2.4]{BJS} associate with a skew shape $D$ a
$321$-avoiding permutation whose Schubert polynomial is a flagged skew Schur
polynomial.  After sufficiently many fixed points are added, their formula
specializes to $s_D(x_1,\ldots,x_n)$ when the extra variables are set equal
to zero.  Huh, Matherne,
M\'esz\'aros, and St.~Dizier \cite[Theorem~6]{HMMSD} proved that the finite
dual of every Schubert polynomial is Lorentzian.  Ross, S\"u\ss, and Wannerer
\cite[Theorems~4.5 and~5.12]{RSW} expressed this result by saying that
Schubert polynomials are dually Lorentzian and proved that this property is
preserved under nonnegative linear substitutions.  These results yield
Theorem~\ref{thm:dual-intro}.

To pass from Theorem~\ref{thm:dual-intro} to Theorem~\ref{thm:main}, we use
the rectangular complement
$C_{n,R}(\lambda/\mu)=(R^n,\mu)/\lambda$, where $R\ge\lambda_1$.  The dual
Jacobi--Trudi identity identifies
$\Nop(s_{\lambda/\mu}(x_1,\ldots,x_n))$ with the finite dual of
$s_{C_{n,R}(\lambda/\mu)}(x_1,\ldots,x_n)$.  The latter skew Schur polynomial
is dually Lorentzian by Theorem~\ref{thm:dual-intro}, and hence its finite
dual is Lorentzian.  For related work on complements of Schubert polynomials,
see Fan, Guo, and Liu \cite{FGL}.

The remainder of the paper is organized as follows.
Section~\ref{sec:dually-lorentzian} recalls dually Lorentzian polynomials,
realizes skew Schur polynomials as specializations of Schubert polynomials,
and proves Theorem~\ref{thm:dual-intro}.
Section~\ref{sec:rectangular-complement} establishes the rectangular
complement identities and proves Theorem~\ref{thm:main}.  It also records the
dependence of the complement on the chosen rectangle and derives the
log-concavity inequalities for skew Kostka numbers.

\section{Dually Lorentzian skew Schur polynomials}
\label{sec:dually-lorentzian}

For $\alpha,\kappa\in\Znonneg^n$, the notation $\alpha\le\kappa$ means
coordinatewise inequality.  Suppose that $\kappa\in\Znonneg^n$ and
$f\in\R[x_1,\ldots,x_n]$ satisfy $\alpha\le\kappa$ for every
$\alpha\in\supp(f)$.  Following Ross, S\"u\ss, and Wannerer
\cite[Definition~4.2 and Remark~4.3]{RSW}, define
\begin{equation}\label{eq:dual-definition}
  f^{\vee,\kappa}
  :=\Nop\bigl(x^\kappa f(x^{-1})\bigr).
\end{equation}
Equivalently, if $f=\sum_{\alpha\le\kappa}c_\alpha x^\alpha$, then
\[
  f^{\vee,\kappa}
  =\sum_{\alpha\le\kappa}
   \frac{c_\alpha}{(\kappa-\alpha)!}x^{\kappa-\alpha}.
\]

A homogeneous polynomial $f$ is \emph{dually Lorentzian} if
$f^{\vee,\kappa}$ is Lorentzian for some, and hence every, vector $\kappa$
satisfying this support bound.

Schubert polynomials were introduced by Lascoux and Sch\"utzenberger
\cite{LS}.  Their Newton polytopes were studied by Monical, Tokcan, and Yong
\cite{MTY} and by Fink, M\'esz\'aros, and St.~Dizier \cite{FMSD}.  We recall
the two results on dually Lorentzian polynomials needed below.  Write
$\Schubert_w$ for the Schubert polynomial indexed by $w$.  Huh, Matherne,
M\'esz\'aros, and St.~Dizier \cite[Theorem~6]{HMMSD} proved that, for
$w\in S_q$, the finite dual of $\Schubert_w$ with respect to
$(q-1,\ldots,q-1)$ is Lorentzian.  Ross, S\"u\ss, and Wannerer
\cite[Theorem~4.5]{RSW} restated this result in the following form.

\begin{theorem}\label{thm:schubert-dual}
Let $w$ be a permutation.  Then the Schubert polynomial $\Schubert_w$ is
dually Lorentzian.
\end{theorem}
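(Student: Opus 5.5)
The plan is to derive the statement directly from \cite[Theorem~6]{HMMSD}. The only remaining work is to check that this result fits the definition of dual Lorentzianity in \eqref{eq:dual-definition}, including the number of variables in the ambient ring and the choice of bounding vector $\kappa$.

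First, fix a permutation $w$ and choose $q$ with $w\in S_q$. Recall the degree bound: every monomial $x^\alpha$ of $\Schubert_w$ satisfies $\alpha_i\le q-i\le q-1$. Also, $\Schubert_w$ only involves $x_1,\ldots,x_{q-1}$. This follows from the transition/divided-difference description $\Schubert_w=\partial_{w^{-1}w_0}(x_1^{q-1}x_2^{q-2}\cdots x_{q-1})$, since divided differences never raise the degree of the maximal exponent in any variable. Hence $\kappa=(q-1,\ldots,q-1)$ is a valid support bound. In the ring with $q-1$ (or $q$) variables, \cite[Theorem~6]{HMMSD} states precisely that $\Schubert_w^{\vee,\kappa}$ is Lorentzian. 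So $\Schubert_w$ is dually Lorentzian there.

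Second, I would handle the dependence on the ambient ring and on $q$. Schubert polynomials are stable under the embedding $S_q\hookrightarrow S_{q+1}$, so enlarging $q$ does not change the polynomial. If we work in $n$ variables with $n$ larger than needed, the extra variables do not occur. Setting their $\kappa$-coordinates to $0$ leaves the dual unchanged as a polynomial, so it is still Lorentzian. If instead one insists on a larger $\kappa$, I would use the "for some, and hence every" clause of the definition. To justify that clause, note that replacing $\kappa$ by $\kappa+\mathbf e_i$ changes the dual $g=f^{\vee,\kappa}$ into the polynomial obtained by integrating $g$ once in $x_i$. That is, the coefficient of $x^{\beta}$ goes to the coefficient of $x^{\beta+\mathbf e_i}$ divided by $\beta_i+1$. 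This operation preserves the Lorentzian property by Br\"and\'en--Huh, since it is the symbol-level operation dual to $\partial_{x_i}$ on normalized polynomials. Conversely, $\partial_{x_i}$ recovers $g$ and preserves Lorentzianity.

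The main obstacle is this independence-of-$\kappa$ step, specifically that integration in one variable preserves Lorentzian polynomials. I would argue it via \cite[Theorem~2.25]{BH} and the characterization of Lorentzian polynomials through $M$-convex support and the Hessians of the derivatives $\partial^\gamma$. The support simply shifts by $\mathbf e_i$ and stays $M$-convex. For the Hessian condition, any derivative $\partial^\gamma$ of the integrated polynomial either contains $\partial_{x_i}$, in which case it reduces to a derivative of $g$, or it is a quadratic obtained from a derivative of $g$ by integration in $x_i$. The second case can be checked directly from the degree-one Lorentzian condition, or cited from \cite{RSW}, where this independence is stated.
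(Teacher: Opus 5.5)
Your proposal is correct and matches the paper, which likewise takes the statement directly from \cite[Theorem~6]{HMMSD} in the formulation of \cite[Theorem~4.5]{RSW}; your extra check that the choice of $\kappa$ does not matter (integration in $x_i$ preserves Lorentzianity and $\partial_{x_i}$ undoes it) is the fact the paper builds into its definition by citing \cite{RSW}. One small imprecision: a single divided difference can raise the exponent of an individual variable (it sends $x_1^2$ to $x_1+x_2$), so the clean justification of $\alpha_i\le q-i$ is that each divided difference preserves the span of monomials below the staircase $(q-1,\ldots,1,0)$, although the weaker bound $\alpha_i\le q-1$, which is all you use, does follow from your remark.
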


Ross, S\"u\ss, and Wannerer \cite[Theorem~5.12]{RSW} further proved that the
dually Lorentzian property is preserved under nonnegative linear
substitutions.  More precisely, if
$f\in\R[x_1,\ldots,x_N]$ is dually Lorentzian,
$A\in\R_{\ge0}^{N\times m}$, and
$\mathbf y=(y_1,\ldots,y_m)^{\mathsf T}$, then
$f(A\mathbf y)$ is dually Lorentzian.

We next realize a skew Schur polynomial in finitely many variables as a
specialization of a Schubert polynomial.  For background on skew Schur
polynomials, see Stanley \cite[Chapter~7]{StanleyEC2} and Macdonald
\cite[Chapter~I]{Macdonald}.  We draw partitions and skew diagrams in English
notation and extend partitions by zero parts whenever necessary.  Rows are
numbered from top to bottom and columns from left to right, so a box $(i,j)$
lies in row $i$ and column $j$.  A
semistandard tableau is weakly increasing along rows and strictly increasing
down columns.  If $m_i(T)$ is the number of entries equal to $i$ in a
tableau $T$, its weight is $x^T=x_1^{m_1(T)}\cdots x_n^{m_n(T)}$.  The skew
Schur polynomial is $s_D(x_1,\ldots,x_n)=\sum_T x^T$, where the sum ranges
over all semistandard tableaux of shape $D$ with entries in
$\{1,\ldots,n\}$.  Permutations are written in one-line notation.  Billey,
Jockusch, and Stanley \cite[Proposition~2.1]{BJS} associate a skew shape
$\Sigma_w$ with every $321$-avoiding permutation $w$.  Their converse
construction \cite[Proposition~2.2]{BJS} will be used in the following lemma.

\begin{lemma}\label{lem:bjs-shape}
Let $D$ be a nonempty skew shape.  Then there is a $321$-avoiding permutation
$w$ such that
$s_{\Sigma_w}(x_1,\ldots,x_n)=s_D(x_1,\ldots,x_n)$ for every integer $n\ge1$.
\end{lemma}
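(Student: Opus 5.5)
The plan is to reduce $D$ to a normalized form that Billey--Jockusch--Stanley's converse construction \cite[Proposition~2.2]{BJS} accepts, and to check that this reduction does not change $s_D(x_1,\ldots,x_n)$ for any $n$.

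First I would make explicit what $\Sigma_w$ is and which skew shapes arise. Recall that for a $321$-avoiding $w$, $\Sigma_w$ is obtained from the diagram of $w$ by removing the empty rows and columns and reflecting/translating the remaining boxes so that they form a skew diagram $\lambda/\mu$. By \cite[Proposition~2.2]{BJS}, every skew shape occurs this way, but only up to translation and up to deleting empty rows and columns. The remaining work is therefore to show that $s_D$ is invariant under exactly this ambiguity.

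The key steps, in order, are as follows.

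\begin{enumerate}
\item \emph{Normalize $D$.} Write $D=\lambda/\mu$ and delete any row $i$ with $\lambda_i=\mu_i$. Likewise delete any column that contains no boxes of $D$, and translate so that the first nonempty row and the first nonempty column have index $1$. Call the result $D'$. Because $D$ is nonempty, $D'$ is a nonempty skew shape in the sense of \cite{BJS}, that is, a connected-by-translation skew diagram with no empty rows or columns.

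\item \emph{Check that $s_D=s_{D'}$ in $n$ variables.} The semistandard conditions only compare boxes that are adjacent in the same row or column. Deleting an empty row or column, or translating, preserves these adjacency relations among the remaining boxes. It also preserves the relative positions within each row and column.

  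The one subtle case is an empty column strictly between boxes of the same row. This cannot happen in a skew shape, since the rows of $\lambda/\mu$ are intervals. Similarly, an empty row cannot lie strictly between boxes of the same column. Hence the identity map on fillings is a bijection of semistandard tableaux that preserves weights, and $s_D(x_1,\ldots,x_n)=s_{D'}(x_1,\ldots,x_n)$ for every $n$.

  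If $D$ is disconnected, the same argument works componentwise. In that case $s_D$ is the product of the $s$ of its components, and the shape $D'$ obtained by "sliding together" components with no shared rows or columns is again a skew shape with the same tableaux.

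\item \emph{Apply the converse construction.} Apply \cite[Proposition~2.2]{BJS} to $D'$. It produces a $321$-avoiding $w$ with $\Sigma_w=D'$ up to translation, and translation does not affect $s$ by step 2.
\end{enumerate}

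The main obstacle is bookkeeping conventions. \cite{BJS} may draw $\Sigma_w$ in French rather than English notation, or it may normalize it up to a $180^\circ$ rotation.

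If $\Sigma_w$ turns out to be the rotation $D'^{\circ}$ of $D'$, I would invoke the classical symmetry $s_{D^{\circ}}=s_D$ \cite[Chapter~I]{Macdonald}. This symmetry comes from the bijection $T\mapsto$ (rotate and replace $i$ by $n+1-i$) combined with the symmetry of $s_D$. It holds for every $n$ because $s_D(x_1,\ldots,x_n)$ is symmetric in $x_1,\ldots,x_n$.

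If instead $\Sigma_w$ is the transpose, I would apply the construction to the transpose of $D'$ from the start, so that the shape that comes out is $D'$ itself. I expect to spend most of the care on confirming which of these conventions \cite{BJS} uses, and on confirming that the construction there covers disconnected shapes. If it does not, I would concatenate the permutations for the components into a direct-sum-type permutation; such a permutation is still $321$-avoiding when the blocks are placed appropriately.
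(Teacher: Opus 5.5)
Your Step~3 carries essentially all of the content of the lemma, and it is asserted rather than proved. The obstacle is also not a matter of drawing conventions. What Proposition~2.2 of Billey--Jockusch--Stanley supplies, and what the paper uses, is an isomorphism of \emph{labeled posets}: a finite convex subset of $\mathbb Z^2$ in the product order, with box $(i,j)$ labeled $i-j$ and all labels positive, is isomorphic to $(P_w,\omega_w)$ for some $321$-avoiding $w$. Here $\Sigma_w$ is a convex embedding of $(P_w,\omega_w)$ with the same labeling rule.

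To use this you must first verify the hypotheses. One is convexity of $D$ in the product order, which is stronger than rows and columns being intervals. The other is positivity of labels, which your normalization does not give, since a box $(1,c)$ has label $1-c\le 0$; the paper instead translates $D$ downward until $i>j$ on every box. You must then show that a label-preserving poset isomorphism $\varphi\colon D\to\Sigma_w$ yields a weight-preserving bijection of semistandard tableaux.

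A labeled poset does not in general determine its convex embedding up to translation. For example, $(3,3,1)/(3,2)$ and $(2,1,1)/(1,1)$ both consist of two incomparable boxes labeled $-1$ and $2$, yet they are not translates. Your deletion of empty rows and columns happens to make ``$\Sigma_w=D'$ up to translation'' true. However, you give no argument for it, and any argument needs the observation below.

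The missing idea is short, and it makes Steps~1--2, the rotation and transpose contingencies, and the workaround for disconnected shapes all unnecessary.
\begin{itemize}
\item In a convex subset of $\mathbb Z^2$, every cover relation is a unit step from $(i,j)$ to $(i,j+1)$ or to $(i+1,j)$.
\item The label $i-j$ decreases by one along a horizontal cover and increases by one along a vertical cover.
\item A poset isomorphism preserves covers, so a label-preserving one sends horizontal covers to horizontal covers and vertical covers to vertical covers. In particular, the worries about rotations, transposes, or French notation disappear.
\item A filling of a convex set is semistandard exactly when it weakly increases along horizontal covers and strictly increases along vertical covers.
\end{itemize}
Hence transporting fillings along $\varphi$ is a weight-preserving bijection between semistandard tableaux of $D$ and of $\Sigma_w$ with entries in $\{1,\ldots,n\}$. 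This holds for every $n$ and whether or not $D$ is connected. Together with the convexity check and the downward translation, this is exactly the paper's proof.
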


\begin{proof}
Write $D=\lambda/\mu$.  Translate $D$ far enough downward that every box
$(i,j)$ of the resulting diagram $D'$ satisfies $i>j$.  Translation
preserves horizontal and vertical adjacencies.  For every $n\ge1$, it
therefore induces a weight-preserving bijection between the semistandard
tableaux of $D$ and $D'$ with entries in $\{1,\ldots,n\}$.  Regard $D'$ as a
poset with the product order and label each box $(i,j)\in D'$ by $i-j$.

We claim that $D'$ is convex in $\mathbb Z^2$ with the product order.
Before translation, let $(i,j),(k,\ell)\in D$ and $(a,b)\in\mathbb Z^2$
satisfy $(i,j)\le(a,b)\le(k,\ell)$.  Then
\[
  b\le\ell\le\lambda_k\le\lambda_a,
  \qquad
  b\ge j>\mu_i\ge\mu_a,
\]
and hence $(a,b)\in D$.  Translation is an automorphism of the product
order, so the same conclusion holds for $D'$.

Billey, Jockusch, and Stanley \cite[Proposition~2.2]{BJS} show that there is
a $321$-avoiding permutation $w$ whose labeled poset $(P_w,\omega_w)$ is
isomorphic to this labeled poset.  Billey, Jockusch, and Stanley
\cite[Proposition~2.1 and the discussion following it]{BJS} identify
$(P_w,\omega_w)$ with the associated skew shape $\Sigma_w$, embedded as a
convex subset of $\mathbb Z^2$, in such a way that
$\omega_w(i,j)=i-j$.  We use this embedding in English notation.  Let
$\varphi:D'\longrightarrow\Sigma_w$ be the resulting label-preserving poset
isomorphism.

By convexity, every cover in $D'$ or $\Sigma_w$ is a unit horizontal or
vertical step.  Along a horizontal cover, the label $i-j$ decreases by one,
whereas along a vertical cover, it increases by one.  Since $\varphi$ preserves
labels, it sends horizontal covers to horizontal covers and vertical covers to
vertical covers.

For a filling $T$ of $D'$, define the filling $\varphi_*T$ of $\Sigma_w$ by
$(\varphi_*T)(\varphi(b))=T(b)$ for every box $b\in D'$.  A filling is
semistandard precisely when it is weakly increasing along horizontal covers
and strictly increasing along vertical covers.  It follows that $T$ is
semistandard if and only if $\varphi_*T$ is semistandard.  These two fillings
have the same weight.  Together with the downward translation, this gives a
weight-preserving bijection between the semistandard tableaux of $D$ and
$\Sigma_w$ with entries in $\{1,\ldots,n\}$ for every $n\ge1$.  Consequently,
$s_{\Sigma_w}(x_1,\ldots,x_n)=s_D(x_1,\ldots,x_n)$ for every $n\ge1$.
\end{proof}

For $u=(u_1,\ldots,u_a)\in S_a$ and
$v=(v_1,\ldots,v_b)\in S_b$, define
\[
  u\times v
  =(u_1,\ldots,u_a,v_1+a,\ldots,v_b+a)\in S_{a+b}.
\]
Let $\mathrm{id}_m$ denote the identity permutation in $S_m$.  For Schubert
polynomials, we use the convention of Billey, Jockusch, and Stanley
\cite[p.~346]{BJS}.

\begin{proposition}\label{prop:schubert}
Let $D$ be a skew shape and let $n\ge1$.  There exist integers
$p,m_0\ge1$ and a $321$-avoiding permutation $w\in S_p$ such that, for
every $m\ge m_0$,
\[
  s_D(x_1,\ldots,x_n)
  =\Schubert_{\mathrm{id}_m\times w}
   (x_1,\ldots,x_n,0,\ldots,0).
\]
Here the variables of the Schubert polynomial are
$z_1,\ldots,z_{m+p-1}$, with $z_i=x_i$ for $1\le i\le n$ and $z_i=0$
for $n<i\le m+p-1$.
\end{proposition}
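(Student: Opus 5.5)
We first dispose of the empty case. If $D$ is empty, then $s_D=1$, and we take $w=\mathrm{id}_1$, $p=1$, $m_0=1$. Then $\mathrm{id}_m\times w=\mathrm{id}_{m+1}$ and $\Schubert_{\mathrm{id}_{m+1}}=1$.

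For nonempty $D$, we invoke Lemma~\ref{lem:bjs-shape} to obtain a $321$-avoiding $w\in S_p$ with $s_{\Sigma_w}(x_1,\ldots,x_n)=s_D(x_1,\ldots,x_n)$ for all $n$. It then suffices to prove the claimed identity with $\Sigma_w$ in place of $D$.

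The key input is Billey--Jockusch--Stanley \cite[Theorem~2.2]{BJS}. For a $321$-avoiding permutation $u$, it expresses $\Schubert_u$ as a flagged skew Schur polynomial. Concretely, $\Schubert_u$ is the sum over semistandard fillings $T$ of the skew shape $\Sigma_u$ of $z^T$, subject to a flag condition: each entry in row $i$ (in their coordinates) is bounded above by some flag value $f_i(u)$, determined by the descents of $u$. I would first check how $\Sigma_u$ and the flags change when $u$ is replaced by $\mathrm{id}_m\times w$. Prepending $m$ fixed points preserves $321$-avoidance. It translates the associated labeled poset, so $\Sigma_{\mathrm{id}_m\times w}$ is a translate of $\Sigma_w$: it is shifted diagonally by $m$, which keeps the labels $i-j$ unchanged. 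Meanwhile every flag bound increases by $m$, since descent positions shift by $m$. This is exactly the mechanism in the proof of \cite[Corollary~2.4]{BJS}. There, $\Schubert_{\mathrm{id}_m\times w}$ tends to the skew Schur function $s_{\Sigma_w}$ in infinitely many variables as $m\to\infty$, with the flags becoming vacuous. As in the proof of Lemma~\ref{lem:bjs-shape}, translation gives a weight-preserving bijection of semistandard fillings, so shape changes cause no trouble.

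We then specialize $z_i=0$ for $i>n$. This kills every filling that uses an entry larger than $n$. The surviving terms are semistandard fillings of $\Sigma_w$ with entries in $\{1,\ldots,n\}$ that also satisfy the flag bounds. The flag bounds are at least $m+(\text{minimum original flag})$. So choose $m_0$ such that every flag bound of $\mathrm{id}_{m}\times w$ is at least $n$ for all $m\ge m_0$, for instance $m_0=\max(1,n)$, since the flags are at least $1$ after the shift. Then the flag conditions are automatic for entries at most $n$, and the specialization equals $s_{\Sigma_w}(x_1,\ldots,x_n)=s_D(x_1,\ldots,x_n)$. The variable count $m+p-1$ matches the largest index that can appear, so the setup in the statement is consistent.

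The main obstacle is bookkeeping: we must pin down the precise form of the flags in \cite[Theorem~2.2]{BJS} under their Schubert convention, and verify that appending $\mathrm{id}_m$ shifts both the shape, which should be a label-preserving translate, and the flags by exactly $m$. I would verify this directly from the BJS definitions. The $321$-avoiding $\mathrm{id}_m\times w$ has the same nonfixed structure as $w$ shifted by $m$, so its code and descent set are those of $w$ shifted by $m$. From this, the bound $f_i\ge m+1$ follows for every row that contains boxes, and that gives the choice of $m_0$.
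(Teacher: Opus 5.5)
Your proposal is correct and follows essentially the same route as the paper. Both use Lemma~\ref{lem:bjs-shape}, the flagged skew Schur formula of \cite[Theorem~2.2]{BJS}, and the shift of shape and flag under $\mathrm{id}_m\times w$ taken from the proof of \cite[Corollary~2.4]{BJS}, followed by specialization with $m_0$ large enough that every shifted flag is at least $n$.

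A few details are off, though none affects validity:
\begin{itemize}
\item The flag entries are the positions of the nonzero entries of the code $c(w)$, not the descents. For example, $w=231$ has flag $(1,2)$ but descent set $\{2\}$. Both still shift by $m$, so your conclusion about the flags holds.
\item The shape of $\mathrm{id}_m\times w$ is a horizontal translate of $\Sigma_w$, not a diagonal one. The reduced words are shifted by $m$, so the labels $i-j$ change by $m$ rather than staying fixed. Since any translation preserves the tableau generating function, this does not matter.
\item In the empty case you should take $m_0=n$, as the paper does, rather than $m_0=1$. This guarantees $m+p-1\ge n$, so the substitution $z_i=x_i$ for $i\le n$ makes sense. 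The bound $N\ge n$ is also used later in the proof of Theorem~\ref{thm:dual-intro}.
\end{itemize}
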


\begin{proof}
If $D$ is empty, take $p=1$, $w=\mathrm{id}_1$, and $m_0=n$.  Both
sides are then equal to $1$.  Assume that $D$ is nonempty, and choose
$w\in S_p$ and $\Sigma_w$ as in Lemma~\ref{lem:bjs-shape}.

For $q\ge1$, put $X_q=(z_1,\ldots,z_q)$.  The code of
$w=(w_1,\ldots,w_p)$ is $c(w)=(c_1,\ldots,c_p)$, where
\[
  c_i=\#\{j:i<j\le p\text{ and }w_j<w_i\}.
\]
Following Billey, Jockusch, and Stanley \cite[p.~364]{BJS}, let
$\widehat\phi=(\widehat\phi_1,\ldots,\widehat\phi_\ell)$ be the flag attached
to $w$.  Its entries are the positions of the nonzero entries of $c(w)$.  The
expression $s_{\Sigma_w}(X_{\widehat\phi_1},\ldots,
X_{\widehat\phi_\ell})$ restricts the entries in row $i$ to be at most
$\widehat\phi_i$.  By \cite[Theorem~2.2]{BJS},
\[
  \Schubert_w
  =s_{\Sigma_w}\bigl(
      X_{\widehat\phi_1},\ldots,X_{\widehat\phi_\ell}
    \bigr).
\]
Moreover, for every integer $m\ge1$, the proof of
\cite[Corollary~2.4]{BJS} shows that the skew shape associated with
$\mathrm{id}_m\times w$ is a horizontal translate of $\Sigma_w$, while its flag is
$(\widehat\phi_1+m,\ldots,\widehat\phi_\ell+m)$.
Since translation does not change the tableau generating function, we obtain
\begin{equation}\label{eq:bjs}
  \Schubert_{\mathrm{id}_m\times w}
  =s_{\Sigma_w}\bigl(
      X_{\widehat\phi_1+m},\ldots,
      X_{\widehat\phi_\ell+m}
    \bigr).
\end{equation}

Since the last component of the code of $w\in S_p$ is zero,
\[
  \widehat\phi_i\le p-1
  \qquad (1\le i\le\ell).
\]
Thus only $z_1,\ldots,z_{m+p-1}$ occur in \eqref{eq:bjs}.  Choose
$m_0$ so that, for $m\ge m_0$,
\[
  \widehat\phi_i+m\ge n\quad(1\le i\le\ell),
  \qquad
  m+p-1\ge n.
\]
After setting the variables as in the statement, the surviving tableaux are
precisely those with entries in $\{1,\ldots,n\}$.  Equation~\eqref{eq:bjs} and
Lemma~\ref{lem:bjs-shape} now give
\[
  \Schubert_{\mathrm{id}_m\times w}
   (x_1,\ldots,x_n,0,\ldots,0)
  =s_{\Sigma_w}(x_1,\ldots,x_n)
  =s_D(x_1,\ldots,x_n).
\]
This proves the proposition.
\end{proof}

\begin{proof}[Proof of Theorem~\ref{thm:dual-intro}]
Choose $p,m_0,w$ as in Proposition~\ref{prop:schubert}, fix
$m\ge m_0$, and put
\[
  N=m+p-1,
  \qquad
  v=\mathrm{id}_m\times w\in S_{N+1}.
\]
The choice of $m_0$ ensures that $N\ge n$.
Regard the Schubert polynomial of $v$ as
\[
  F(z_1,\ldots,z_{N+1})
  =\Schubert_v(z_1,\ldots,z_{N+1}).
\]
By Theorem~\ref{thm:schubert-dual}, $F$ is dually Lorentzian.

Let $\mathbf x=(x_1,\ldots,x_n)^{\mathsf T}$ and set
\[
  A=
  \begin{pmatrix}
    I_n\\
    0_{(N+1-n)\times n}
  \end{pmatrix}
  \in\R_{\ge0}^{(N+1)\times n}.
\]
Ross, S\"u\ss, and Wannerer \cite[Theorem~5.12]{RSW} show that
$F(A\mathbf x)$ is dually Lorentzian.  A Schubert polynomial indexed by an
element of $S_{N+1}$ does not depend on its last variable.  Hence
Proposition~\ref{prop:schubert} gives
\[
  F(A\mathbf x)
  =\Schubert_{\mathrm{id}_m\times w}
    (x_1,\ldots,x_n,0,\ldots,0)
  =s_D(x_1,\ldots,x_n),
\]
which proves the theorem.
\end{proof}

In the preceding proof, the substitution only sets variables equal to zero.
The exponent bound used in the next section will instead be read directly
from the columns of the complementary skew shape.

\section{Rectangular complementation and the main theorem}
\label{sec:rectangular-complement}

Throughout this section, a skew Schur polynomial without an explicit
alphabet is understood to be specialized to $x_1,\ldots,x_n$.
For a partition $\nu$, write $\nu'$ for its conjugate.  For a skew shape $D$,
write $|D|$ for its number of boxes.

For integers $n,R\ge1$, let
$R^n=(R,\ldots,R)$ denote the rectangular partition with $n$ parts.  If $\mu$
is a partition with $\mu_1\le R$, write
$(R^n,\mu)$ for the concatenated partition
$(R,\ldots,R,\mu_1,\mu_2,\ldots)$.  Whenever
$\lambda\subseteq(R^n,\mu)$, write
$C_{n,R}(\lambda/\mu)=(R^n,\mu)/\lambda$ for the rectangular complement.

We begin with the identity needed in the proof of the main theorem.

\begin{proposition}\label{prop:complement}
Let $\mu\subseteq\lambda$ be partitions with $\lambda\ne\varnothing$, let
$n\ge1$, and suppose that
$s_{\lambda/\mu}(x_1,\ldots,x_n)\ne0$.  Then, for every integer
$R\ge\lambda_1$, the containment $\lambda\subseteq(R^n,\mu)$ holds.  Hence
$C_{n,R}(\lambda/\mu)$ is a well-defined skew shape.  Moreover,
\[
  |C_{n,R}(\lambda/\mu)|=nR-|\lambda/\mu|,
\]
and
\begin{equation}\label{eq:complement-reverse}
  (x_1\cdots x_n)^R
  s_{C_{n,R}(\lambda/\mu)}(x_1^{-1},\ldots,x_n^{-1})
  =s_{\lambda/\mu}(x_1,\ldots,x_n).
\end{equation}
\end{proposition}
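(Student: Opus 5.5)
The argument has three parts: the containment, the box count, and the identity \eqref{eq:complement-reverse}, which carries the content.

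\emph{Containment.} Nonvanishing of $s_{\lambda/\mu}(x_1,\ldots,x_n)$ means some semistandard tableau of shape $\lambda/\mu$ has entries in $\{1,\ldots,n\}$. Columns strictly increase, so every column has at most $n$ boxes: $\lambda'_j-\mu'_j\le n$ for all $j$. The conjugate of $(R^n,\mu)$ has $j$-th part $n+\mu'_j$ for $j\le R$ and $0$ for $j>R$. Since $R\ge\lambda_1$, we have $\lambda'_j=0$ for $j>R$, and the column bound gives $\lambda'_j\le n+\mu'_j$ for $j\le R$. Hence $\lambda\subseteq(R^n,\mu)$, and $C_{n,R}(\lambda/\mu)$ is a skew shape. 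Its size is $|(R^n,\mu)|-|\lambda|=nR+|\mu|-|\lambda|=nR-|\lambda/\mu|$.

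\emph{Identity via dual Jacobi--Trudi.} Write $e_k=e_k(x_1,\ldots,x_n)$, with $e_k=0$ for $k<0$ or $k>n$. The dual Jacobi--Trudi identity is
\[
s_{\lambda/\mu}=\det\bigl(e_{\lambda'_i-\mu'_j-i+j}\bigr)_{1\le i,j\le R}.
\]
It holds with matrix size $R\ge\lambda_1$, because $\lambda'$ and $\mu'$ have at most $R$ nonzero parts. Put $\alpha=(R^n,\mu)'$, so $\alpha_j=n+\mu'_j$ for $j\le R$. Then
\[
s_{C_{n,R}(\lambda/\mu)}=\det\bigl(e_{n+\mu'_i-\lambda'_j-i+j}\bigr)_{1\le i,j\le R}.
\]
We use the symmetry $(x_1\cdots x_n)\,e_k(x^{-1})=e_{n-k}(x)$, valid for all integers $k$ with the zero convention. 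Distributing one factor $x_1\cdots x_n$ to each of the $R$ rows turns the $(i,j)$ entry into $e_{\lambda'_j-\mu'_i+i-j}$. This is the transpose of the dual Jacobi--Trudi matrix of $\lambda/\mu$, so the determinants agree and \eqref{eq:complement-reverse} follows.

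\emph{Main obstacle.} Most of the work is in the index bookkeeping. We need to confirm that the matrix size $R$ is legitimate for both shapes. For the complement, $(R^n,\mu)$ has exactly $R$ columns, and $\lambda'$ has at most $R$ nonzero parts. We also need to check that the zero convention for $e_k$ with $k>n$ is compatible with the reversal symmetry; it is, since $k>n$ corresponds to $n-k<0$.

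An alternative, more combinatorial route would complement each column of a tableau within $\{1,\ldots,n\}$. This needs a check that row weak increase is preserved, which is messier, so the determinantal argument is the one I would follow.
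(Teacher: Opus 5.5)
Your proposal is correct and follows essentially the same route as the paper. The containment comes from column strictness, which gives $\lambda'_j-\mu'_j\le n$, and the identity comes from the size-$R$ dual Jacobi--Trudi determinant: you multiply each row by $x_1\cdots x_n$ using $(x_1\cdots x_n)e_k(x^{-1})=e_{n-k}(x)$ and then transpose.

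The only differences are cosmetic. You start from the complement's determinant at $x^{-1}$ and land directly on \eqref{eq:complement-reverse}. The paper instead starts from $s_{\lambda/\mu}(x^{-1})$, derives $(x_1\cdots x_n)^R s_{\lambda/\mu}(x^{-1})=s_{C_{n,R}(\lambda/\mu)}(x)$, and then substitutes $x\mapsto x^{-1}$. The paper also proves that the determinant may be enlarged to size $R$ rather than citing it, and it notes explicitly that $(R^n,\mu)$ is a partition because $\mu_1\le\lambda_1\le R$.
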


\begin{proof}
Put $r=\lambda_1$.  Since the specialization is nonzero, there is a
semistandard tableau of shape $\lambda/\mu$ with entries in
$\{1,\ldots,n\}$.  Strict increase down columns therefore gives
\[
  0\le\lambda'_j-\mu'_j\le n
  \qquad (1\le j\le r).
\]
Every part of $\mu$ is at most $\lambda_1\le R$, so $(R^n,\mu)$ is a
partition.  After padding $\lambda'$ and $\mu'$ with zeros to length $R$, the
column inequalities give
\[
  \lambda'_j\le n+\mu'_j
  \qquad (1\le j\le R),
\]
where the inequality is trivial for $j>r$.  Hence
$\lambda\subseteq(R^n,\mu)$, so $C_{n,R}(\lambda/\mu)$ is a well-defined skew
shape.

Let $e_a(x)$ denote the elementary symmetric polynomial of degree $a$ in the
alphabet $x$, with $e_0=1$ and $e_a=0$ for $a<0$ or $a>n$.  Applying the dual
Jacobi--Trudi identity to the alphabet $x^{-1}$ (see
Macdonald \cite[Chapter~I, (5.5)]{Macdonald}), we have
\[
  s_{\lambda/\mu}(x^{-1})
  =\det\bigl(
      e_{\lambda'_i-\mu'_j-i+j}(x^{-1})
    \bigr)_{1\le i,j\le r}.
\]
The determinant may be enlarged to size $R$.  Indeed, after padding
$\lambda'$ and $\mu'$ with zeros, the entries with $i>r$ and $j\le r$ vanish,
while the lower-right block is
\[
  \bigl(e_{-i+j}(x^{-1})\bigr)_{r<i,j\le R},
\]
which is upper triangular with diagonal entries $e_0=1$.  Hence
\[
  s_{\lambda/\mu}(x^{-1})
  =\det\bigl(
      e_{\lambda'_i-\mu'_j-i+j}(x^{-1})
    \bigr)_{1\le i,j\le R}.
\]
In $n$ variables,
\[
  (x_1\cdots x_n)e_a(x^{-1})=e_{n-a}(x)
\]
for every integer $a$.
Multiplying each row of the determinant by $x_1\cdots x_n$ and then
transposing gives
\begin{align*}
  (x_1\cdots x_n)^R s_{\lambda/\mu}(x^{-1})
  &=\det\bigl(
      e_{n+\mu'_i-\lambda'_j-i+j}(x)
    \bigr)_{1\le i,j\le R}\\
  &=s_{(R^n,\mu)/\lambda}(x),
\end{align*}
since
\[
  (R^n,\mu)'_i=n+\mu'_i
  \qquad (1\le i\le R).
\]
This gives the forward complement identity.  Also,
\[
  |C_{n,R}(\lambda/\mu)|
  =nR+|\mu|-|\lambda|
  =nR-|\lambda/\mu|.
\]
Finally, replace $x$ by $x^{-1}$ in the forward complement identity.  As an
identity of Laurent polynomials, this gives
\[
  (x_1\cdots x_n)^{-R}s_{\lambda/\mu}(x)
  =s_{C_{n,R}(\lambda/\mu)}(x^{-1}).
\]
Multiplying by $(x_1\cdots x_n)^R$ proves
\eqref{eq:complement-reverse}.
\end{proof}

Both the nonvanishing assumption and the bound $R\ge\lambda_1$ are needed.
For
$\lambda=(1,1)$, $\mu=\varnothing$, and $n=R=1$, the specialization vanishes
and $(R^n,\mu)$ does not contain $\lambda$.  For $\lambda=(2)$,
$\mu=\varnothing$, and $n=R=1$, the specialization is nonzero, but the same
containment fails because $R<\lambda_1$.

\par\medskip
\begin{corollary}\label{cor:normalization}
Under the hypotheses of Proposition~\ref{prop:complement}, for every
$R\ge\lambda_1$ let
\[
  E_R=C_{n,R}(\lambda/\mu),
  \qquad
  \kappa_R=(R,\ldots,R)\in\Znonneg^n.
\]
Then $s_{E_R}\ne0$, every $\alpha\in\supp(s_{E_R})$ satisfies
$\alpha\le\kappa_R$, and
\begin{equation}\label{eq:normalization-identity}
  \Nop(s_{\lambda/\mu})=s_{E_R}^{\vee,\kappa_R}.
\end{equation}
\end{corollary}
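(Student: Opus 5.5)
The corollary should follow by comparing coefficients in the reverse complement identity \eqref{eq:complement-reverse} of Proposition~\ref{prop:complement} and then applying $\Nop$. I will establish the three claims in order.

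\emph{Nonvanishing.} If $s_{E_R}$ were zero, the left side of \eqref{eq:complement-reverse} would vanish. That contradicts the hypothesis $s_{\lambda/\mu}(x_1,\ldots,x_n)\ne0$.

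\emph{Support bound.} I expect this to be the only step needing a small argument. Write $s_{E_R}=\sum_\alpha d_\alpha x^\alpha$. The left side of \eqref{eq:complement-reverse} is then $\sum_\alpha d_\alpha x^{\kappa_R-\alpha}$, a priori only a Laurent polynomial. The right side is a genuine polynomial. Because the map $\alpha\mapsto\kappa_R-\alpha$ is injective, no cancellation can occur among the terms. Hence every $\alpha$ with $d_\alpha\ne0$ satisfies $\kappa_R-\alpha\ge0$, that is, $\alpha\le\kappa_R$.

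One could also see this directly from tableaux. Each column of $E_R=(R^n,\mu)/\lambda$ has length at most $n$, and $E_R$ has at most $R$ columns. An entry $i$ can occur at most once per column, so its multiplicity is at most $R$. The coefficient comparison above is cleaner, however, and I will use it.

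\emph{The identity.} Comparing coefficients in \eqref{eq:complement-reverse} gives
\[
  s_{\lambda/\mu}=\sum_{\alpha\le\kappa_R}d_\alpha x^{\kappa_R-\alpha},
\]
so $s_{\lambda/\mu}=x^{\kappa_R}s_{E_R}(x^{-1})$. Applying $\Nop$ to both sides and using the definition \eqref{eq:dual-definition}, which is legitimate because of the support bound just established, gives
\[
  \Nop(s_{\lambda/\mu})=\Nop\bigl(x^{\kappa_R}s_{E_R}(x^{-1})\bigr)=s_{E_R}^{\vee,\kappa_R}.
\]
This is \eqref{eq:normalization-identity}.
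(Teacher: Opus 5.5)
Your proof is correct and follows the paper's argument: nonvanishing comes from \eqref{eq:complement-reverse}, and the identity comes from applying $\Nop$ to $x^{\kappa_R}s_{E_R}(x^{-1})=s_{\lambda/\mu}$. The only difference is that you get the support bound $\alpha\le\kappa_R$ by comparing coefficients in the Laurent identity, whereas the paper uses the column count you mention as your alternative (at most $R$ columns, each entry at most once per column); both arguments are valid.
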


\begin{proof}
Equation~\eqref{eq:complement-reverse} implies that $s_{E_R}\ne0$.  The shape
$E_R$ has at most $R$ nonempty columns.  Since a fixed entry occurs at most
once in each column of a semistandard tableau, every
$\alpha\in\supp(s_{E_R})$ satisfies
\[
  \alpha\le\kappa_R.
\]
The reverse identity also gives
$x^{\kappa_R}s_{E_R}(x^{-1})=s_{\lambda/\mu}(x)$.  Applying $\Nop$ and using
\eqref{eq:dual-definition} proves \eqref{eq:normalization-identity}.
\end{proof}

\begin{example}\label{ex:complement}
Let $\lambda=(3,2)$, $\mu=(1)$, and $n=3$.  Taking the minimal width
$R=\lambda_1=3$ gives
\[
  E_3=C_{3,3}(\lambda/\mu)=(3,3,3,1)/(3,2).
\]
The two shapes have sizes $4$ and $5=3\cdot3-4$, respectively.  Let
$m_\nu=m_\nu(x_1,x_2,x_3)$ denote the monomial symmetric polynomial indexed
by $\nu$.  Direct expansion gives
\begin{align*}
  s_{(3,2)/(1)}
  &=m_{(3,1)}+2m_{(2,2)}+3m_{(2,1,1)},\\
  s_{E_3}
  &=m_{(3,2)}+2m_{(3,1,1)}+3m_{(2,2,1)}.
\end{align*}
The two supports correspond under
$\alpha\mapsto(3,3,3)-\alpha$.  Therefore
\[
  s_{E_3}^{\vee,(3,3,3)}
  =\frac{1}{6}m_{(3,1)}
   +\frac{1}{2}m_{(2,2)}
   +\frac{3}{2}m_{(2,1,1)}
  =\Nop\bigl(s_{(3,2)/(1)}\bigr).
\]
This is the identity in Corollary~\ref{cor:normalization} for this
complement.
\end{example}

\begin{proof}[Proof of Theorem~\ref{thm:main}]
If $\lambda/\mu$ is empty, then $\Nop(s_{\lambda/\mu})=1$.  If
$s_{\lambda/\mu}(x_1,\ldots,x_n)=0$, then its normalization is $0$.
Both polynomials are Lorentzian by the definition in the Introduction.

Now suppose that the shape is nonempty and its specialization is nonzero.
Choose any $R\ge\lambda_1$, and let $E_R$ and $\kappa_R$ be as in
Corollary~\ref{cor:normalization}.  By that corollary, every
$\alpha\in\supp(s_{E_R})$ satisfies $\alpha\le\kappa_R$, and
\eqref{eq:normalization-identity} holds.  Theorem~\ref{thm:dual-intro} shows
that $s_{E_R}$ is dually Lorentzian.  Hence
$s_{E_R}^{\vee,\kappa_R}$ is Lorentzian.  By
\eqref{eq:normalization-identity}, this polynomial is precisely
$\Nop(s_{\lambda/\mu}(x_1,\ldots,x_n))$.  It follows that the latter is
Lorentzian in
$\R[x_1,\ldots,x_n]$.
\end{proof}

We next record how the complement depends on the choices involved.  Retain
the hypotheses and notation of Corollary~\ref{cor:normalization}.  For the
fixed presentation $\lambda/\mu$, if $R'\ge R\ge\lambda_1$, then applying
\eqref{eq:complement-reverse} with widths $R$ and $R'$ and replacing $x$ by
$x^{-1}$ gives
\[
  s_{C_{n,R'}(\lambda/\mu)}
  =(x_1\cdots x_n)^{R'-R}s_{C_{n,R}(\lambda/\mu)}.
\]
Thus changing the width multiplies the complementary polynomial by a power
of $x_1\cdots x_n$, while Corollary~\ref{cor:normalization} gives
\[
  s_{C_{n,R}(\lambda/\mu)}^{\vee,\kappa_R}
  =\Nop(s_{\lambda/\mu}).
\]
Applying the construction twice, with the presentation kept fixed, gives
$C_{n,R}((R^n,\mu)/\lambda)=(R^n,\lambda)/(R^n,\mu)$, which is a downward
translate of $\lambda/\mu$ by $n$ rows.

The complement also depends on the presentation and on the number of
variables.  It need not commute with specialization of the alphabet.  For
$D=(1)/\varnothing$ and $R=1$, for example,
\[
  C_{2,1}(D)=(1,1)/(1),
  \qquad
  C_{1,1}(D)=\varnothing,
  \qquad
  \left.s_{C_{2,1}(D)}(x_1,x_2)\right|_{x_2=0}=x_1
  \ne 1=s_{C_{1,1}(D)}(x_1).
\]
If $E=C_{2,1}(D)$, then likewise
$\left.s_E^{\vee,(1,1)}\right|_{x_2=0}=x_1$, whereas
$\bigl(s_E(x_1,0)\bigr)^{\vee,(1)}=1$.  Thus the alphabet size and the
multidegree bound must be fixed before taking a finite dual.

We conclude with a consequence for skew Kostka numbers.  For partitions
$\mu\subseteq\lambda$ and an integer $n\ge1$, set
$d=|\lambda/\mu|$.  For a weak composition
$\beta\in\Znonneg^n$ of $d$, let $K_{\lambda/\mu,\beta}$ denote the number
of semistandard tableaux of shape $\lambda/\mu$ and content $\beta$.
Equivalently,
\[
  K_{\lambda/\mu,\beta}
  =[x^\beta]s_{\lambda/\mu}(x_1,\ldots,x_n).
\]
Recall that $\mathbf e_1,\ldots,\mathbf e_n$ are the standard basis vectors
of $\mathbb Z^n$.

\begin{corollary}\label{cor:skew-kostka}
Let $\mu\subseteq\lambda$ be partitions, let $n\ge1$, and put
$d=|\lambda/\mu|$.  For any $\alpha\in\Znonneg^n$ with $|\alpha|=d$
and any distinct $i,j\in\{1,\ldots,n\}$ with $\alpha_i,\alpha_j\ge1$, one has
\begin{equation}\label{eq:skew-kostka-logconcavity}
  K_{\lambda/\mu,\alpha}^{\,2}
  \ge
  K_{\lambda/\mu,\alpha+\mathbf e_i-\mathbf e_j}\,
  K_{\lambda/\mu,\alpha-\mathbf e_i+\mathbf e_j}.
\end{equation}
\end{corollary}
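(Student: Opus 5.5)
The plan is to derive the inequality from Theorem~\ref{thm:main}, using the standard fact that the coefficients of a Lorentzian polynomial satisfy a log-concavity inequality in root directions. Br\"and\'en and Huh show that if $f=\sum_\alpha \frac{c_\alpha}{\alpha!}x^\alpha$ is Lorentzian with $c_\alpha\ge0$, then for every $\alpha$ and all $i\ne j$,
\[
  c_\alpha^2\ge c_{\alpha+\mathbf e_i-\mathbf e_j}\,c_{\alpha-\mathbf e_i+\mathbf e_j}.
\]
We apply this with $f=\Nop(s_{\lambda/\mu})$, whose normalized coefficients are exactly $c_\beta=K_{\lambda/\mu,\beta}$.

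To keep the argument self-contained, I would derive the inequality directly from the Hessian condition in the definition. Suppose $d\ge2$ (if $d\le1$, the constraints on $\alpha$ force the right-hand side to vanish). Set $\gamma=\alpha-\mathbf e_i-\mathbf e_j\in\Znonneg^n$; this uses $\alpha_i,\alpha_j\ge1$. Then $|\gamma|=d-2$, and $g=\partial^\gamma f$ is a quadratic form.

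Its Hessian entries are read off from coefficients. Since $\partial^\gamma\bigl(x^\beta/\beta!\bigr)=x^{\beta-\gamma}/(\beta-\gamma)!$, the entries of the Hessian $H$ of $g$ are
\[
  H_{kl}=c_{\gamma+\mathbf e_k+\mathbf e_l}.
\]
In particular,
\[
  H_{ii}=c_{\alpha+\mathbf e_i-\mathbf e_j},\qquad
  H_{jj}=c_{\alpha-\mathbf e_i+\mathbf e_j},\qquad
  H_{ij}=c_\alpha.
\]

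Next, restrict to the principal $2\times2$ submatrix on indices $\{i,j\}$. By Cauchy interlacing, a symmetric matrix with at most one positive eigenvalue has every principal submatrix with at most one positive eigenvalue. A $2\times2$ symmetric matrix with nonnegative diagonal entries and at most one positive eigenvalue has nonpositive determinant. Indeed, if the determinant were positive, both eigenvalues would share a sign, and a nonnegative trace would force both to be positive. This gives
\[
  H_{ii}H_{jj}-H_{ij}^2\le0,
\]
which is \eqref{eq:skew-kostka-logconcavity}.

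The main point needing care is the bookkeeping of normalization factorials. One has to confirm that after applying $\partial^\gamma$ the factorials cancel exactly, so that the Hessian entries are the raw Kostka numbers rather than rescaled ones. This follows from the formula $\partial^\gamma(x^\beta/\beta!)=x^{\beta-\gamma}/(\beta-\gamma)!$ and the identity $\partial_k\partial_l\bigl(x^{\mathbf e_k+\mathbf e_l}/(\mathbf e_k+\mathbf e_l)!\bigr)=1$, which holds both for $k\ne l$ and for $k=l$. The remaining edge cases need no separate treatment. If some $\alpha\pm(\mathbf e_i-\mathbf e_j)$ has a negative entry, the corresponding Kostka number is zero and the inequality is trivial. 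Similarly, $K_{\lambda/\mu,\beta}=0$ automatically when $|\beta|\ne d$.
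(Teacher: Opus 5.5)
Your proposal is correct and follows the paper's proof essentially step for step. You apply Theorem~\ref{thm:main} to $\Nop(s_{\lambda/\mu})$, take $\gamma=\alpha-\mathbf e_i-\mathbf e_j$, read the Hessian entries of $\partial^\gamma\Nop(s_{\lambda/\mu})$ as raw skew Kostka numbers, and conclude via Cauchy interlacing and the sign of the $2\times2$ principal minor. The only cosmetic difference is that you use the Hessian condition in the definition directly, whereas the paper first cites Br\"and\'en--Huh that partial derivatives of Lorentzian polynomials are Lorentzian. One small correction: the case $d\le1$ does not make the right-hand side vanish; it simply cannot occur, since $\alpha_i,\alpha_j\ge1$ with $i\ne j$ already forces $d\ge2$.
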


\begin{proof}
If $s_{\lambda/\mu}(x_1,\ldots,x_n)=0$, the assertion is trivial.  Otherwise,
set
\[
  H=\Nop(s_{\lambda/\mu})
   =\sum_{|\beta|=d}
      \frac{K_{\lambda/\mu,\beta}}{\beta!}x^\beta.
\]
By Theorem~\ref{thm:main}, $H$ is Lorentzian.  Put
$\gamma=\alpha-\mathbf e_i-\mathbf e_j$.
The assumptions on $\alpha$ imply that $\gamma\in\Znonneg^n$ and
$|\gamma|=d-2$.
Br\"and\'en and Huh
\cite[Corollary~2.11]{BH} show that every partial derivative of a Lorentzian
polynomial is Lorentzian.  Thus $Q=\partial^\gamma H$ is a Lorentzian
quadratic polynomial.  The factorial normalization gives
\[
  \partial^\beta H=K_{\lambda/\mu,\beta}
  \qquad (|\beta|=d).
\]
Hence the principal $i,j$ submatrix of the Hessian of $Q$ is
\[
  \begin{pmatrix}
    K_{\lambda/\mu,\alpha+\mathbf e_i-\mathbf e_j}
      & K_{\lambda/\mu,\alpha}\\
    K_{\lambda/\mu,\alpha}
      & K_{\lambda/\mu,\alpha-\mathbf e_i+\mathbf e_j}
  \end{pmatrix}.
\]
Since $Q$ is a Lorentzian quadratic polynomial, its Hessian has at most
one positive eigenvalue. By Cauchy interlacing, the same is true for every
principal submatrix.  Since the diagonal entries displayed above are
nonnegative, this $2\times2$ principal submatrix has nonpositive determinant.
This is exactly
\eqref{eq:skew-kostka-logconcavity}.
\end{proof}

\bigskip
\noindent\textbf{Acknowledgments.}
This work was supported by the National Natural Science Foundation of China
(No.~12171362) and the Tianjin Municipal Natural Science Foundation
(No.~25JCYBJC00430).

\end{document}